\newcommand{\zt}{\zeta}
\newcommand{\bdy}{\partial}
\newcommand{\OM}{\Omega}
\newcommand{\disk}{\mathbb{D}}
\newcommand{\smoo}{\mathcal{C}}
\newcommand{\bcdot}{\boldsymbol{\cdot}}
\newcommand\cjac{{\sf Jac}_{\C}}
\newcommand{\poin}{{\sf p}_{\!{\raisebox{-3pt}{$\scriptstyle{\mathbb{D}}$}}}}
\newcommand{\C}{\mathbb{C}}
\newcommand{\R}{\mathbb{R}}
\newcommand{\N}{\mathbb{N}}
\newcommand{\Z}{\mathbb{Z}}
\theoremstyle{plain}
\newtheorem{thm}{Theorem}[section]
\newtheorem{result}[thm]{Result}
\newtheorem{lemma}[thm]{Lemma}
\newtheorem{prop}[thm]{Proposition}
\theoremstyle{definition}
\newtheorem{defn}[thm]{Definition}
\newtheorem{example}[thm]{Example}
\theoremstyle{remark}
\newtheorem{rmk}[thm]{Remark}
\newenvironment{enumeratei}{\begin{enumerate}%
  [$(i)$]}%
  {\end{enumerate}}
\begin{document}

\title[Proper holomorphic mappings of balanced domains in $\C^n$]{Proper
holomorphic mappings of \\ balanced domains in $\C^n$}
\author{Jaikrishnan Janardhanan}
\address{Department of Mathematics, Indian Institute of Science, Bangalore 560012, India}
\email{jaikrishnan@math.iisc.ernet.in}
\keywords{balanced domains, proper holomorphic maps, Alexander's theorem}
\subjclass[2000]{Primary: 32H35}
\thanks{This work is supported by a UGC Centre for Advanced Study grant
and by a scholarship from the IISc.}

\begin{abstract} 
  We extend a well-known result, about the unit ball, by H. Alexander to a class of
  balanced domains in $\C^n, \ n > 1$. Specifically: we prove that any proper holomorphic
  self-map of a certain type of balanced, finite-type domain in $\C^n, 
  \ n > 1$, is an automorphism. The main novelty of our
  proof is the use of a recent result of Opshtein on the behaviour of the iterates of
  holomorphic self-maps of a certain class of domains. We use Opshtein's theorem, together
  with the tools made available by finiteness of type, to deduce that the aforementioned
  map is unbranched. The monodromy theorem then delivers the result. 
\end{abstract}
\maketitle

\section{Introduction}
The central result of this paper is:

\begin{thm}\label{THM:BAL}
  Let $\OM \subset \C^n, \ n > 1$, be a smoothly bounded balanced
  domain of (D'Angelo) finite type. Assume that $\OM$ has a smooth defining 
  function that is plurisubharmonic in $\OM$. Then every proper holomorphic self-map 
  $F:\OM \to \OM$ is an automorphism.
\end{thm}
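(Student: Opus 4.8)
The plan is to prove that the branch locus of $F$ is empty and then invoke the monodromy theorem. Since $\OM$ is balanced it is star-shaped about the origin, hence contractible, and in particular simply connected. Being a proper holomorphic self-map of a bounded domain, $F$ is a finite branched covering of some multiplicity $m\ge 1$, with branch locus $Z=\{z\in\OM:\cjac F(z)=0\}$, the zero set of a single holomorphic function and hence either empty or a pure $(n-1)$-dimensional analytic hypersurface. If $Z=\varnothing$, then $F$ is a proper local biholomorphism, i.e.\ an unbranched covering of the simply connected domain $\OM$ onto itself; such a covering has a single sheet, so $F$ is a biholomorphism and therefore an automorphism. The entire problem thus reduces to showing $Z=\varnothing$, and I would argue by contradiction, assuming $m\ge 2$ and $Z\neq\varnothing$.

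First I would collect the consequences of the hypotheses. A smooth defining function that is plurisubharmonic on $\OM$ exhibits $\OM$ as hyperconvex, hence pseudoconvex, and D'Angelo finite type then yields Catlin's subelliptic estimates, Bell's Condition~R, and therewith the smooth extension of $F$ to $\overline{\OM}$ with $F(\bdy\OM)\subseteq\bdy\OM$. The single geometric fact I intend eventually to contradict is that finite type forbids any nonconstant analytic disc in $\bdy\OM$. The hypothesis $n>1$ enters here: it guarantees that $Z$, if nonempty, has positive dimension $n-1\ge 1$, so that $Z$ and all its forward images are positive-dimensional analytic varieties. This is exactly what fails when $n=1$, where $z\mapsto z^{2}$ is a proper non-automorphism of the disc.

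The heart of the argument is dynamical. I would follow the forward orbit of the branch locus, $A=\bigcup_{k\ge 0}F^{k}(Z)$, which is the union of the critical loci of the iterates $F^{k}$, whose multiplicities $m^{k}\to\infty$. Were $A$ relatively compact in $\OM$, the positive-dimensional varieties $F^{k}(Z)$ would accumulate on a compact analytic subset of positive dimension of $\OM\subset\Cn$, which is impossible by the maximum principle; hence $A$ must reach $\bdy\OM$. At this point I would apply Opshtein's theorem on the iterates of holomorphic self-maps of domains admitting a plurisubharmonic defining function to control precisely how the orbit approaches the boundary; combined with the smooth boundary extension of the $F^{k}$ and the circular symmetry supplied by the balanced structure, this forces the positive-dimensional varieties $F^{k}(Z)$ to limit onto a nonconstant analytic disc contained in $\bdy\OM$. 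That contradicts finite type, so $Z=\varnothing$.

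The step I expect to be the main obstacle is exactly this last extraction: verifying that $\OM$ meets the hypotheses of Opshtein's theorem, using the smooth extension to control the accumulation of the varieties $F^{k}(Z)$ all the way up to $\bdy\OM$, and certifying that the resulting limit is a genuinely nonconstant disc rather than collapsing to a boundary point. Granting $Z=\varnothing$, the reduction of the first paragraph applies: $F$ is an unbranched covering of the simply connected domain $\OM$, so the local holomorphic inverse continues by the monodromy theorem to a global holomorphic inverse, and $F$ is an automorphism.
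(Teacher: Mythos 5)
Your outer reduction is the same as the paper's and is fine: $\OM$ is balanced, hence contractible, so by the monodromy theorem it suffices to show the branch locus $V_F$ is empty. But the dynamical core of your argument, which you yourself flag as ``the main obstacle,'' is not a gap in an otherwise complete proof --- it \emph{is} the proof, and as sketched it cannot work. Two concrete problems. First, Opshtein's theorem (Result~\ref{RES:OPHI}) is a statement about \emph{recurrent} maps: it says that \emph{if} the iterates of $F$ are not compactly divergent, so that a limit manifold $M$ exists, \emph{then} $\dim M > 1$. It does not ``control how the orbit approaches the boundary,'' and it gives nothing at all until you have established recurrence. You never do. The paper earns recurrence through its Step~1, proving $F^{-1}\{0\}=\{0\}$ (so $F(0)=0$), and that step already requires the two ingredients absent from your proposal: Proposition~\ref{PROP:LOCUS} (every irreducible component of $V_F$ is a union of slices $(\C\bcdot z)\cap\OM$, proved via a Hausdorff-dimension argument on $\overline{V}_F\cap\bdy\OM$) and the Bedford--Bell function $\tau$, which is bounded on $\bdy\OM$ by finite type (Result~\ref{RES:FIN}) and strictly decreases at boundary points lying over the branch locus (Result~\ref{RES:TAU}); Lemma~\ref{LEM:VAR} turns this into the contradiction. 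Second, your proposed contradiction --- that the varieties $F^k(Z)$ ``limit onto a nonconstant analytic disc contained in $\bdy\OM$'' --- has no supporting mechanism, and the preliminary observation feeding into it is vacuous: since $\OM$ admits no compact positive-dimensional analytic subsets, each single $F^k(Z)$ already has closure meeting $\bdy\OM$, so ``$A$ reaches the boundary'' carries no information. (A smaller slip: $\bigcup_k F^k(Z)$ is a union of critical \emph{values}; the critical locus of $F^k$ is $\bigcup_{j<k}(F^j)^{-1}(Z)$, a union of \emph{preimages}.)

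For comparison, the paper's route after recurrence is also quite different from what you envisage: the limit manifold $M$ is identified, via uniqueness of complex geodesics in balanced domains (Proposition~\ref{PROP:FIX} and Result~\ref{RES:CAR}), as a linear slice $L_U\cap\OM$ on which $F$ acts as a diagonal unitary map; then an upper-semicontinuity argument for $\tau$ along the orbit of a boundary point of $\overline{M}$ lying over $V_F$ forces $\dim M\le 1$, and \emph{that} is what contradicts Opshtein's theorem. So Opshtein enters only at the very last step, as the foil to a $\tau$-based dimension bound --- not as a tool for tracking the branch locus toward the boundary. To repair your proposal you would need, at minimum, a proof that $F$ has a fixed point (the paper's Step~1) and a genuine mechanism replacing the unproven ``limit disc'' claim; both lead you essentially back to the $\tau$-function and circular-slice machinery you omitted.
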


We recall that a domain $\OM$ is said to be balanced if whenever $z \in \OM$ and 
$\lambda \in \C, |\lambda| \leq 1$, then we have $\lambda z \in \OM$. The above theorem
is motivated by the famous result of H. Alexander \cite{alexander1977proper}
showing the non-existence of non-injective proper holomorphic self-maps of the unit ball
in $\C^n, \ n >1$ --- which the above theorem generalizes. Since its appearance,
Alexander's theorem has been extended in many different ways. We refer the interested reader
to Section~3 of the survey article by Forstneri{\v{c}} \cite{forstneric1992survey}. One 
such result is that of Bedford and Bell 
\cite{bedford1982proper}, which extends Alexander's theorem to bounded pseudoconvex 
domains having real-analytic boundaries. Recent work relating to Alexander's theorem has,
therefore, focused on domains whose boundaries need not be real-analytic. See, for
instance, the work of Berteloot \cite{berteloot1998holomorphic} and Coupet, Pan and Sukhov
\cite{coupet1999proper,coupet2001proper}. 
\smallskip

An alternative strategy for deriving a conclusion similar to that of Alexander's
theorem, especially when a domain does \emph{not} have real-analytic boundary, is to
impose a condition on its automorphism group. With a hypothesis on the automorphism group,
there are often interesting interactions between the 
symmetries of the domain and the boundary geometry. These interactions allow one to 
simplify the structure of the branch locus of a branched proper holomorphic self-map. 
\smallskip

However, there is a new ingredient to our proof, which we briefly introduce here. In a
relatively recent paper, Opshtein proved the following result:

\begin{result}[\cite{opshtein2006dynamics}, Th{\'e}or{\`e}me A]\label{RES:OPHI}
   Let $D \subset \C^n, \ n \geq 2$, be a smoothly bounded domain having a smooth 
   defining function that is plurisubharmonic in $D$. Let $f : D \to D$ be a proper holomorphic 
   self-map that is recurrent. Then the limit manifold of $f$ is necessarily of dimension
    higher than $1$. 
\end{result}
\noindent Refer to Section~\ref{SEC:ITER} for the definitions of the terms that occur in 
the above result. In \cite{opshtein2006dynamics}, Opshtein suggests that his
results could serve as a new set of tools for establishing Alexander-type theorems. We 
found Opshtein's viewpoint very useful in the context of Theorem~\ref{THM:BAL}: 
Result~\ref{RES:OPHI} plays a key role in our proof of Theorem~\ref{THM:BAL} (which we 
shall presently explain).
\smallskip

Another influence behind this work is the following result:

\begin{result}[Coupet, Pan and Sukhov \cite{coupet1999proper}]\label{RES:CPS}
  Let $\OM \subset \C^2$ be a smoothly bounded balanced pseudoconvex 
  domain of (D'Angelo) finite type. Then every proper holomorphic self-map 
  $F:\OM \to \OM$ is an automorphism.
\end{result} 

\begin{rmk}
  In a later paper \cite{coupet2001proper}, Coupet, Pan and Sukhov have extended their 
  result to smoothly bounded pseudoconvex quasi-circular domains of finite type in $\C^2$.
\end{rmk}

Let $\OM$ and $F$ be as in Theorem~\ref{THM:BAL}. Our proof of Theorem~\ref{THM:BAL} and 
the proof of Result~\ref{RES:CPS} share some key ideas. As $\OM$ is simply connected, to 
prove that $F$ is an automorphism it suffices --- in view of the monodromy theorem --- to
establish that $F$ is unbranched. Thus, a natural approach to proving Theorem~\ref{THM:BAL} is
to \emph{assume} that $F$ is branched, and use this assumption together with
the properties of $\OM$ to reach a contradiction. To this end, we prove
the following result that establishes an important property of the branch locus
of $F$.

\begin{prop}\label{PROP:LOCUS}
  Let $\OM \subset \C^n, \ n > 1,$ be a smoothly bounded balanced pseudoconvex 
	domain of (D'Angelo) finite type. Let $F : \OM \to \OM$ be a proper holomorphic
	mapping, and
	assume that the branch locus $V_F := \{z \in \OM : \cjac(F)(z) = 0\} \neq
	\emptyset$. Let	$X$ be an irreducible component of $V_F$. Then for each $z \in X$,
	the set $(\C \bcdot z) \cap \OM$ is contained in $X$.
\end{prop}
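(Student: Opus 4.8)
The plan is to reduce the whole proposition to a single statement — that the branch locus $V_F$ is invariant under the circle of rotations $R_\theta\colon z\mapsto e^{i\theta}z$ — and then to establish that invariance from the boundary geometry supplied by finite type. First the reduction. Since $\OM$ is balanced we have $0\in\OM$, and each $R_\theta$ is an automorphism of $\OM$ (for $|\lambda|=1$ both $\lambda$ and $\lambda^{-1}$ have modulus $\le 1$). Suppose we knew that $R_\theta(V_F)=V_F$ for every $\theta$. The connected group $\{R_\theta\}$ would then permute the irreducible components of $V_F$, and, varying continuously with $\theta$ into a discrete set of components, would have to fix each one; hence every component $X$ satisfies $R_\theta(X)=X$. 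Now fix $z\in X$ with $z\neq 0$ and consider the one-variable holomorphic function $g(\lambda):=\cjac(F)(\lambda z)$ on the disc $\{\lambda\in\C:\lambda z\in\OM\}$, which has radius larger than $1$ and so contains the unit circle in its interior. Because $R_\theta(z)=e^{i\theta}z\in X\subset V_F$, we get $g(e^{i\theta})=0$ for all $\theta$; a holomorphic function vanishing on the whole circle $|\lambda|=1$ vanishes identically, so $\cjac(F)\equiv 0$ on the disc $\Delta:=(\C\bcdot z)\cap\OM$. Thus $\Delta\subset V_F$, and since $\Delta$ is an irreducible one-dimensional analytic set whose intersection with $X$ contains the (non-discrete) circle $\{e^{i\theta}z\}$, we conclude $\Delta\subseteq X$. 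This is exactly the assertion; note it also forces $0\in X$.

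The remaining step, $R_\theta(V_F)=V_F$, is the crux, and it is not formal. Post-composing $F$ with the unbranched $R_\theta$ leaves $V_F$ unchanged, whereas $V_{F\circ R_\theta}=R_{-\theta}(V_F)$, and there is no a priori reason for $F\circ R_\theta$ to share the fibres of $F$. That finiteness of type is genuinely needed here is visible on domains that violate it: on the bidisc, the map $(z,w)\mapsto(\psi(z)^2,w)$, where $\psi$ is a disc automorphism sending some $c\neq 0$ to $0$, is a branched proper self-map whose branch locus is the affine line $\{z=c\}$, which is neither rotation-invariant nor a cone through the origin. The flat pieces of $\bdy(\disk\times\disk)$ are precisely what permit this off-centre branch locus, so the smoothness and finite type of $\bdy\OM$ must be exploited to exclude such behaviour.

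To prove $R_\theta(V_F)=V_F$ I would pass to the boundary. Pseudoconvexity and finite type guarantee, via the boundary-regularity theory for proper maps, that $F$ extends smoothly to $\overline{\OM}$ and restricts to a finite branched CR-covering $\bdy\OM\to\bdy\OM$; consequently $\overline{V_F}$ meets $\bdy\OM$ in a compact set $K$ governed by the critical data of this covering. The rotations are CR-automorphisms of $\bdy\OM$, so the circle acts on all of this boundary data. The goal is to show $K$, and with it $\overline{V_F}$, must be $S^1$-invariant: an off-centre component of $V_F$ would, exactly as in the bidisc model, force its boundary trace to carry complex-analytic contact with $\bdy\OM$ of a kind that finite type prohibits. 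Establishing the $S^1$-invariance of $K$ and transporting it back into $\OM$ — through the Segre-variety correspondence attached to the smooth extension, together with the identity theorem — would yield $R_\theta(V_F)=V_F$ and hence, by the first paragraph, the proposition.

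The main obstacle is precisely this boundary step, the only place where properness, pseudoconvexity and finite type must truly interact. The difficulty is that the extension of $F$ is merely smooth — finite type does not give real-analyticity, so one cannot simply reflect — and one must therefore argue with the finite-type CR-geometry directly to rule out non-symmetric boundary behaviour. Once $R_\theta(V_F)=V_F$ is secured, everything downstream is soft one-variable complex analysis, as the reduction above shows.
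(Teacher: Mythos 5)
Your first two paragraphs are correct: the reduction of the proposition to the single statement $R_\theta(V_F)=V_F$ works (the connectedness argument fixing each irreducible component, the vanishing of $\lambda\mapsto\cjac(F)(\lambda z)$ on the unit circle, and the irreducibility argument are all sound), and the bidisc example rightly shows that the boundary hypotheses are indispensable. But this reduction does not reduce the difficulty: modulo exactly those soft arguments, $R_\theta(V_F)=V_F$ is \emph{equivalent} to the proposition, so your proposal in effect restates the problem and then, in the third paragraph, offers only a plan for the restated problem. That plan contains no workable mechanism: you assert that an off-centre component would ``force its boundary trace to carry complex-analytic contact with $\bdy\OM$ of a kind that finite type prohibits'', but you never say what that contact is or which result prohibits it; and the one concrete tool you name --- the Segre-variety correspondence --- is precisely the tool that is unavailable here, since Segre varieties require a real-analytic boundary, as you yourself concede. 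This is a genuine gap, and it sits exactly at the crux.

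For comparison, the paper closes this gap with two ingredients, both absent from your proposal. First, for these domains $F$ extends \emph{holomorphically}, not merely smoothly, to a neighbourhood of $\overline{\OM}$ (Lemma~\ref{LEM:EXTBELL}, a consequence of Bell's theorem on circular domains); you assumed only a smooth extension, which is weaker than what is available and insufficient for what follows. Second --- the real mechanism --- the Bedford--Bell invariant $\tau$: for $z_0\in\overline{X}\setminus X\subset\bdy\OM$ one has $\tau(e^{i\theta}z_0)=\tau(z_0)$ because rotations are automorphisms, and combining upper semicontinuity of $\tau$ with Result~\ref{RES:TAU} yields $\tau(F(e^{i\theta}z_0))\leq\tau(F(z_0))<\tau(z_0)=\tau(e^{i\theta}z_0)$ for $\theta$ near $0$, whence $e^{i\theta}z_0\in\overline{V_F}$ on a whole arc; then $\cjac(F)$ restricted to $\disk\bcdot z_0$ is holomorphic past the closed disc and vanishes on a boundary arc, hence identically. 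Note this needs only smoothness, semicontinuity, and Bell's local result --- no reflection and no real analyticity, which is how the paper sidesteps the very obstacle you identify. Even granting this, the conclusion is delivered only at points of the boundary trace $\overline{X}\setminus X$ and only disc by disc; converting it into the statement about all of $X$ and a \emph{single} component requires the dimension-theoretic work of Lemma~\ref{LEM:DIM} (the boundary trace has Hausdorff dimension at least $2n-3$, via holomorphic extremality and Hurewicz--Wallman) together with Chirka's intersection results. So the ``downstream'' part is not soft either, once the invariance arrives --- as any boundary argument will deliver it --- only locally at the boundary.
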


We ought to point out that Proposition~\ref{PROP:LOCUS} is a hypothetical
statement. \emph{If} $F$ as in Theorem~\ref{THM:BAL} were branched, \emph{then} it
would have the above structure. The thrust of our proof is that $F$ can never be branched.
\smallskip

Coupet, Pan and Sukhov have proved a version of Proposition~\ref{PROP:LOCUS} 
for domains in $\C^2$ in which the domain need not be balanced, but is only 
required 
to admit a transverse $\mathbb{T}$-action. The point that is worth highlighting here is  
that by restricting ourselves to balanced domains, we are able to
give an almost
entirely elementary proof of Proposition~\ref{PROP:LOCUS}, and that these methods have 
one significant payoff: we \emph{do not} have to assume in the above results that the 
$\mathbb{T}$-action is transverse. We bypass the 
need for transversality by using some results from dimension theory. These results are
insensitive  to dimension, which allows us to state and prove Theorem~\ref{THM:BAL} in
$\C^n$ for all $n > 1$.
\smallskip

A key object used in the proofs of Proposition \ref{PROP:LOCUS} and Theorem~\ref{THM:BAL}
is
the function $\tau : \bdy \OM \to \Z_+ \cup \{ 0 \}$ introduced by Bedford and Bell (see
\cite{bedford1982proper, bell1982local}). The number $\tau(p)$ is the order of vanishing 
in the tangential directions 
of the Levi determinant of a smoothly bounded pseudoconvex domain at the point $p \in
\bdy \OM$. The function $\tau$ has been used to study the branching behaviour
of proper holomorphic mappings in many earlier results. It turns out that, owing to the
hypothesis that $\OM$ is of (D'Angelo) finite type, $\tau$ is bounded on $\bdy \OM$.
We refer the reader to Section~\ref{SEC:GEOM} for a more precise discussion.
\smallskip

Using Proposition~\ref{PROP:LOCUS}, one can prove that $F^{-1}\{0\} = \{0\}$. 
It is at this point that our methods and the methods of Coupet--Pan--Sukhov
diverge. We make use of Opshtein's theorem to deal with the complexities posed by 
dimension.
The key steps of Theorem~\ref{THM:BAL} may be summarized as follows:
\begin{itemize}
 \item We begin by assuming that $F$ is branched.
 \item If $z_0$ is a point in $F^{-1}\{0\}$ that is not $0$, it
 follows from Proposition~\ref{PROP:LOCUS} that 
 $(\C\bcdot z_0)\cap \overline{\OM}$ is contained in a sequence of distinct
 irreducible components of the branch loci of the iterates $F^k$, $k = 1,2,3,\dots$
 We note that, by Lemma~\ref{LEM:EXTBELL}, $F$ extends holomorphically to a
 neighbourhood of $\overline{\OM}$, whence these irreducible components extend through
 $\bdy \OM$.
 \item We pick a point $q\in (\C\bcdot z_0)\cap \bdy\OM$. Each iterate
 of $F$ must be branched at $q$. Using this fact, and that $q$ is located
 on the closures of \emph{distinct} irreducible components of the branch loci of $F^k$, 
 one can show --- using a result of Bell \cite{bell1982local} --- that $\tau$
 must be unbounded on $\bdy \OM$. This is impossible, whence $F^{-1}\{0\} = \{0\}$.
 \item In particular, $F$ fixes $0$, whence there exists a limit manifold, call it
 $M$, associated to the iterates of $F$. The circular symmetry of $\OM$ makes it
 possible to deduce that $M$ is the intersection of a linear subspace of
 $\C^n$ with $\OM$, and that one may assume, without loss of generality,
 that $F|_M$ is given by
 \[
  (z_1,\dots, z_m, 0,\dots, 0)\,\mapsto\,(e^{i\theta_1}z_1,\dots, 
  e^{i\theta_m}z_m, 0,\dots, 0).
 \]
 \item If $m > 1$ then we can find a point $p\in \overline{M}\cap \bdy\OM$
 that \emph{also} lies in the (prolongation of) the branch locus of $F$.
 We examine the orbit of $p$ under the action of the group generated by
 $F|_M$. The behaviour of the function $\tau$ along this orbit contradicts
 the upper semi-continuity of $\tau$. Hence, $m\leq 1$, which, however,
 contradicts Opshtein's theorem: Result~\ref{RES:OPHI} above.
 \item This proves that our assumption that $F$ is branched must be false.
\end{itemize}  
 
As is evident from the above outline, we will need some definitions and
facts from the theory of (iterative) dynamics of holomorphic self-maps. These
will be presented in Section~\ref{SEC:ITER}. Section~\ref{SEC:BALESS} is 
devoted to stating
and proving certain propositions that are essential to our proofs. The proofs of
Proposition~\ref{PROP:LOCUS} and Theorem~\ref{THM:BAL} will be presented in 
Section~\ref{SEC:BALPROOF}.
\smallskip

Before, we proceed further, we clarify that, in this paper, whenever we use use the word
``smooth'', it will refer to $\smoo^{\infty}$-smoothness unless specified otherwise. By a
smoothly bounded domain, we shall mean a bounded domain whose boundary is 
$\smoo^\infty$-smooth.
\smallskip

\section{Boundary geometry}\label{SEC:GEOM}
In this section, we shall summarize the properties of the function $\tau$ alluded to
above. For the sake
of completeness, we first give the definition of D'Angelo finite type. 

\begin{defn}
  Let $D \subset \C$ be a domain, and let $f:D \to \C$ be a smooth function. We define 
  the \emph{multiplicity} of $f$ at $p \in D$ to be the least positive integer $k$ such
  that the homogeneous polynomial of degree $k$ in the Taylor series of $(f-f(p))$
  around $p$ is not identically zero, defining it to be $+\infty$ if no such $k$ exists.
   The multiplicity of a $\C^n$-valued function 
  is defined to be the minimum of the multiplicities of its components. We denote
  the multiplicity of a function $f$ at a point $p$ by $v_p(f)$.
\end{defn}
\begin{defn}[D'Angelo]
  Let $M \subset \C^n$ be a smooth real hypersurface, and let $p \in M$. Let $r$ be a 
  defining
  function for $M$ in some neighbourhood of the point $p$. We say that $p$ is a point of 
  \emph{finite type} (also known as finite 1-type) if there is a constant $C > 0$ such
  that
  \[
	\frac{v_0(r \circ \phi)}{v_0(\phi)} \leq C,
  \]
  whenever $\phi : \disk \to \C^n$ is a non-constant analytic disk such that $\phi(0) =
  p$.
\end{defn}

It is simple to prove that the
above definition is independent of the choice of the defining function $r$. Whenever we say below
that a bounded domain is of finite type, we shall mean that each boundary point is of finite type in the
sense of the above definition.
\smallskip

We now define a function $\tau$. This
function was introduced by Bedford and Bell \cite{bedford1982proper}, and has been used 
in many results to examine the branching behaviour of proper holomorphic mappings. It 
is used multiple times in our proof of Theorem~\ref{THM:BAL}

\begin{defn}
  Let $D \subset \C^n$ be a smoothly bounded pseudoconvex domain and $r$ a smooth
  defining function for $D$. Define
   \[
	\Lambda_r := \det 
	\begin{bmatrix} 
	  0  & r_{z_i}  \\
	 r_{\overline{z}_j} & r_{z_i \overline{z}_j}
   \end{bmatrix}_{i, j =1}^n,
   \]
   the \emph{determinant of the Levi-form of $r$} (for a justification of this 
   terminology, see \cite[Section~2]{kohn1999sub}). For $p \in \bdy D$, we define
   $\tau(p)$ to be the smallest non-negative integer $m$ such that there is a 
   tangential differential operator $T$ of order $m$ on $\bdy D$ such that
   $T \Lambda_r(p) \neq 0$.
\end{defn}

\begin{rmk}
 As any other defining function $r'$ can be written as $h \cdot r$, where $h$ is a
 positive smooth function defined on some neighbourhood $U$ of the boundary point $p$,
 we see that the number $\tau(p)$ is independent of the choice of $r$. Note that by the
 pseudoconvexity of $D$, $\Lambda_r(p) \geq 0 \ \forall p \in \bdy D$, and $\Lambda_r(p)
 = 0$ if and only if $p$ is a point of weak pseudoconvexity.
\end{rmk}

Observe that $\tau$ is an upper semi-continuous function on $\bdy D$.
\smallskip

Suppose $f: D_1 \to D_2$ is a proper holomorphic mapping between bounded pseudoconvex 
domains with $\smoo^{\infty}$-smooth boundaries that extends smoothly to a $\bdy D_1$-open
neighbourhood of a point $p \in \bdy D_1$.
Let $\rho$ be a defining function for $D_2$ such that $\rho \circ f$ is a
local defining function for $\bdy D_1$ near $p$ (see \cite[Remark 2]
{bedford1984proper}). It follows that
\[
  \Lambda_{\rho \circ f}(z) = |\cjac(f)(z)|^2\Lambda_{\rho}(f(z)),
\]
for $z \in \bdy D_1$ and close to $p$, from which the next result is straightforward to prove.
\smallskip

Let us establish the following notation that we shall use in the remainder of this 
paper. Given any open set $D \subset \C^n$ and a holomorphic map 
$f : D \to \C^n$, $V_f$ will be defined as:
\[
  V_f := \{z \in D : \cjac(f)(z) = 0\}.
\]

\begin{result}[Bell \cite{bell1982local}]\label{RES:TAU}
  Given a proper holomorphic mapping $g : D_1 \to D_2$ between bounded pseudoconvex
  domains in $\C^n, \ n  > 1$, with smooth boundaries, if $g$ extends smoothly to 
  $\bdy D_1$ in a
  neighbourhood of $p \in \bdy D_1$, then $\tau(p) \geq \tau(g(p))$, and when $\tau(p)
  \neq \infty$, the following are equivalent :
  \begin{enumeratei}
	\item 	$\tau(p) = \tau(g(p))$;
	\item     $g$ extends to a local diffeomorphism at $p$;
	\item     $p \not \in \overline{V}_g$.	 
  \end{enumeratei}
\end{result}

The next two results give an idea why the $\tau$ function is relevant to
our main theorem.
\smallskip

Let $D \subset \C^n$ be a smoothly bounded pseudoconvex domain of finite type
and let $p\in \bdy D$. It can be argued that $\tau(p)$ is finite.
As the D'Angelo 1-type of $p$ is finite,
if we pick a non-zero vector $V\in T_p(\bdy D)\cap iT_p(\bdy D)$, then by
definition there exists a $k\in \Z_+$ such that the homogeneous polynomial of degree $k$ in the Taylor
expansion of $\zt\mapsto r(p+\zt V)$ around $0\in \C$ is {\em not} identically zero
($r$ here is a defining function of $D$).
However, as this $k$ depends on the choice of $V\in T_p(\bdy D)\cap iT_p(\bdy D)$, it becomes
quite technical to produce  a {\em single} finite-order tangential differential operator
$T$ at $p$ such that $T\Lambda_r(p)\neq 0$. We could not find an elementary proof
of Result~\ref{RES:FIN} (see below) in the literature, although it has been made use of a number
of times; see, for instance, \cite{pan1991reinhardt}. A recent work of Nicoara
\cite[Main Theorem 1.1]{nicora2012eff} provides an effective upper bound for
$\tau$ in terms of the D'Angelo 1-type. For our purposes, the following consequence of Nicoara's
result suffices:   

\begin{result}\label{RES:FIN}
  Let $D \subset \C^n$ be a smoothly bounded pseudoconvex domain that is of finite type
  in the sense of D'Angelo. Then there is
  an $m\in \Z_+$ such that $\tau(p) \leq m \ \forall p \in \bdy D$.  
\end{result}

The next result is by Coupet, Pan and Sukhov \cite{coupet2001proper}. Since the statement 
of the result below is slightly different from that of  \cite[Lemma~1]{coupet2001proper},
and its proof is sufficiently important for our purposes, we provide a proof of it. Here,
and elsewhere in this paper, $F^\nu$ will denote the $\nu$-th iterate of $F$.
 
\begin{lemma}[a paraphrasing of Lemma~1 of \cite{coupet2001proper}]\label{LEM:VAR}
  Let $\OM \subset \C^n, n > 1$, be a bounded pseudoconvex domain with
  $\smoo^{\infty}$-smooth
  boundary that is also of finite type. Let $F : \OM \to \OM$ be a proper holomorphic 
  mapping (which extends smoothly to $\bdy \OM$ by \cite{catlin1987sub}
  together with \cite{bell1982boundary} or \cite{diederich1982boundary}). Assume that
  $V_F \neq \emptyset$ and let $L_1$ be any
  irreducible component of $V_F$. Inductively select any irreducible component of
  $F^{-1}(L_{\nu})$, $\nu = 1, 2, 3,\dots$, and denote it by $L_{\nu+1}$. Then:
  \begin{enumerate}[$a)$]
   \item $L_{\nu}$ is an irreducible component of $V_{F^{\nu}}$.
   \item $L_i\neq L_j$ if $i\neq j$.
  \end{enumerate}
\end{lemma}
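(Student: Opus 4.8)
The plan is to treat the two assertions separately, deriving both from the multiplicativity of the complex Jacobian under composition together with the fact that a proper holomorphic self-map is a finite, surjective branched covering. For part $a)$ I would first record the chain-rule identity
\[
  \cjac(F^\nu)(z)\;=\;\prod_{k=0}^{\nu-1}\cjac(F)\big(F^k(z)\big),
\]
which shows at once that $V_{F^\nu}=\bigcup_{k=0}^{\nu-1}F^{-k}(V_F)$, and that $L_\nu\subseteq V_{F^\nu}$ since $L_\nu\subseteq F^{-1}(L_{\nu-1})\subseteq\cdots\subseteq F^{-(\nu-1)}(L_1)$ forces $F^{\nu-1}(L_\nu)\subseteq L_1\subseteq V_F$. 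As $F$ is finite and surjective, the preimage of a pure $(n-1)$-dimensional set is again pure $(n-1)$-dimensional, so inducting on $\nu$ (with $L_1$ a component of the hypersurface $V_F$) gives $\dim L_\nu=n-1$. Because $\cjac(F^\nu)\not\equiv 0$, the set $V_{F^\nu}$ is pure $(n-1)$-dimensional, and an irreducible $(n-1)$-dimensional subvariety of it is necessarily one of its components; this proves $a)$.

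The first ingredient for part $b)$ is the identity $F(L_{\nu+1})=L_\nu$: the restriction $F|_{L_{\nu+1}}$ is proper, so by Remmert's proper mapping theorem $F(L_{\nu+1})$ is an analytic subset of $L_\nu$; it has dimension $n-1$ because $F$ is finite, and $L_\nu$ is irreducible of the same dimension, so they coincide. Hence $F^{\mu}(L_\nu)=L_{\nu-\mu}$ for $\mu\leq\nu-1$. Now suppose, for contradiction, that $L_i=L_j$ with $i<j$. Applying $F^{i-1}$ gives $L_1=L_{p+1}$ with $p:=j-i\geq 1$, and therefore
\[
  F^p(L_1)=L_1 .
\]

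It remains to contradict this periodicity, and here the finite-type hypothesis enters decisively through the function $\tau$. Since $L_1$ is a positive-dimensional analytic subset of the bounded domain $\OM$, its closure meets $\bdy\OM$; fix $q\in\overline{L_1}\cap\bdy\OM$. Using the smooth extension of $F$ to $\bdy\OM$ furnished in the statement, $F^p$ maps $\bdy\OM$ into itself and satisfies $F^p(\overline{L_1})\subseteq\overline{F^p(L_1)}=\overline{L_1}$, so the forward orbit $x_k:=F^{kp}(q)$ lies in $\overline{L_1}\cap\bdy\OM\subseteq\overline{V_F}$ for every $k$. By finiteness of type, $\tau(x_k)<\infty$ (Result~\ref{RES:FIN}). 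Applying Bell's theorem (Result~\ref{RES:TAU}) to $F$ at $x_k$: as $x_k\in\overline{V_F}$, equality in $\tau(x_k)\geq\tau(F(x_k))$ is excluded, so $\tau(x_k)>\tau(F(x_k))$; combining this strict drop with the monotonicity $\tau(F(x_k))\geq\cdots\geq\tau(F^p(x_k))=\tau(x_{k+1})$ yields $\tau(x_k)>\tau(x_{k+1})$ for all $k$. This is a strictly decreasing sequence of non-negative integers, which is absurd. Hence no repetition $L_i=L_j$ can occur, proving $b)$.

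I expect the main obstacle to be the bookkeeping in part $b)$: justifying $F(L_{\nu+1})=L_\nu$ (hence the reduction to a genuine period $p$) by properness and equality of dimensions, and then verifying at each orbit point the hypotheses needed to invoke Result~\ref{RES:TAU}, namely the smooth extension of $F$ and the finiteness of $\tau$. It is worth stressing that the purely algebraic multiplicity argument cannot by itself give $b)$: without finite type the statement genuinely fails, as $(z,w)\mapsto(z^2,w)$ on the bidisk has $V_{F^\nu}=\{z=0\}$ for all $\nu$. The contradiction must therefore be extracted from the boundary behaviour of $\tau$, exactly as above.
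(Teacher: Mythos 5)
Your proof is correct and takes essentially the same route as the paper's: part $a)$ rests on $F^{-1}(V_{F^{\nu}})\subset V_{F^{\nu+1}}$ together with dimension considerations (which the paper leaves as ``immediate''), and part $b)$ reduces any repetition to the periodicity $F^p(L_1)=L_1$ via $F(L_{\nu+1})=L_\nu$, then derives a strictly decreasing sequence of finite $\tau$-values along the orbit of a point $q\in\overline{L_1}\cap\bdy\OM$, contradicting Results~\ref{RES:TAU} and~\ref{RES:FIN} exactly as the paper does. The only cosmetic difference is that you apply Bell's result to $F$ step-by-step along the orbit, whereas the paper applies it in one stroke to the iterate $F^p$.
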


\begin{proof}
  It is elementary to see that  $V_{F^{\nu+1}} \supset F^{-1}(V_{F^{\nu}})$. From this $(a)$
  is immediate. Note that the 
  restriction of $F$ to each $L_{\nu+1}$ is proper, and consequently $F(L_{\nu+1}) =
  L_{\nu}$. We now show that the stated procedure for constructing $\{L_\nu\}$,
  \emph{irrespective of the choices made}, ensures
  that $L_{\nu+1} \neq L_j \ \forall j\leq \nu, \nu=1,2,3,\ldots$. Suppose
  not, and let $m$ be the smallest positive integer such that $L_m = L_{m+p}$ for some
  positive integer $p$. If $m > 1$, then $F(L_m) = F(L_{m+p})$, and so $L_{m-1} =
  L_{m+p-1}$, contradicting the definition of $m$. So, $m = 1$ and $L_1 = L_{1+p}$. Since
  $F^p(L_{1+p}) = L_1$, we have $F^p(L_1) = L_1$. As $L_1 \subset V_{F^p}$, for any $q \in
  \overline{L}_1 \cap \bdy \OM$, we see, by Result~\ref{RES:TAU}, that
  \begin{equation}\label{EQ:TYPE_DES}
	0 \leq \cdots \tau(F^{kp}(q)) < \tau(F^{(k-1)p}(q)) < \cdots < \tau(q).
  \end{equation}
  This, in view of Result~\ref{RES:FIN}, contradicts the finite-type hypothesis on
  $\bdy \OM$.
\end{proof}
The extendability of $F$ as stated in the previous lemma --- and, indeed, a stronger
conclusion --- is guaranteed when $\OM$ is as in Theorem~\ref{THM:BAL}. We shall need this
stronger conclusion, presented in the next lemma, in our proof. This lemma is an easy
consequence of \cite[Theorem~2]{bell1982proper}; see \cite[Lemma 4.2]{bharali2014proper}
for a proof.

\begin{lemma}\label{LEM:EXTBELL}
 Let $f: D_1\to D_2$ be a proper holomorphic map between bounded balanced
 domains. Assume that the intersection of every complex line
 passing through $0$ with $\bdy{D}_1$ is a circle. Then $f$ extends holomorphically to a
 neighbourhood of $\overline{D}_1$.
\end{lemma}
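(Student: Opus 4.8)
The plan is to reduce the global assertion to a holomorphic extension across each point of $\bdy D_1$, and to obtain that local extension from Bell's theorem \cite[Theorem~2]{bell1982proper}, whose hypotheses are guaranteed by the circularity assumption. First I would record the structural consequences of the hypotheses. Since $D_1$ and $D_2$ are balanced, each is invariant under the circle action $z \mapsto e^{i\theta}z$, $\theta \in \R$; writing $h_j$ for the Minkowski functional of $D_j$, we have $D_j = \{h_j < 1\}$ with $h_j(\lambda z) = |\lambda|\,h_j(z)$. The hypothesis that every complex line through $0$ meets $\bdy D_1$ in a single circle is exactly what forces $h_1$ to be continuous and strictly positive off the origin, so that $\overline{D}_1 = \{h_1 \leq 1\}$ and $\bdy D_1 = \{h_1 = 1\}$, both unions of orbits of the circle action. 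Thus $D_1$ is a complete circular domain of the type to which Bell's theorem applies, and $D_2$ is likewise balanced.

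Next I would set up the mechanism that the theorem supplies. Because $D_1$ is balanced, $f$ admits a homogeneous expansion $f = \sum_{k \geq 0} P_k$, where each $P_k \colon \C^n \to \C^n$ is a vector-valued homogeneous polynomial of degree $k$ and the series converges locally uniformly on $D_1$. Restricting $f$ to the analytic disk $\zeta \mapsto \zeta v$ for $v \in \bdy D_1$ (so $h_1(v) = 1$) gives a bounded holomorphic map $\disk \to D_2$, whence Cauchy's estimates on $\disk$ yield $|P_k(v)| \leq R$ for all $k$, where $R = \sup_{w \in D_2}|w|$. By homogeneity this only reproves convergence on $D_1$ itself. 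The substantive content of \cite[Theorem~2]{bell1982proper} is to upgrade this boundedness to a geometric decay $|P_k(v)| \leq C(1+\delta)^{-k}$, using the \emph{properness} of $f$ rather than merely its boundedness; granting such decay, the homogeneity identity $|P_k(\lambda v)| = |\lambda|^k |P_k(v)|$ shows that $\sum_k P_k$ converges locally uniformly on the strictly larger balanced domain $\{h_1 < 1+\delta\} \supset \overline{D}_1$, which is precisely the sought holomorphic extension.

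The hard part — and the step I expect to be the main obstacle — is this very passage from boundedness to decay of the homogeneous coefficients, since it is here that full-dimensional properness is indispensable: the restriction of $f$ to a single complex line is bounded but is \emph{not} a proper map into $D_2$, so no line-by-line reflection argument is available, and one genuinely needs the ambient structure. Bell circumvents this through his transformation rule relating the Bergman projections of $D_1$ and $D_2$ via $f$ and its complex Jacobian $\cjac(f)$, combined with the $S^1$-symmetry of the Bergman kernels of the two circular domains; the symmetry forces the relevant objects, and hence $f$, to extend across $\bdy D_1$. Having holomorphic extension across each point of the compact set $\bdy D_1$, a routine compactness and patching argument then discharges the reduction promised at the outset, producing a single neighbourhood of $\overline{D}_1$ to which $f$ extends holomorphically. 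For the details we refer, as the statement does, to \cite[Lemma~4.2]{bharali2014proper}.
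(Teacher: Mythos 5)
Your proposal is correct and takes essentially the same route as the paper, whose entire ``proof'' of this lemma is the citation of \cite[Theorem~2]{bell1982proper} together with \cite[Lemma~4.2]{bharali2014proper}; your opening reduction --- that the circle-intersection hypothesis forces the Minkowski functional $h_1$ to be continuous, so that $\overline{D}_1 = \{h_1 \leq 1\}$, $\bdy D_1 = \{h_1 = 1\}$, and Bell's theorem for circular domains becomes applicable --- is precisely the content of that cited lemma, and your account of Bell's mechanism (Bergman projection transformation rule plus $S^1$-symmetry, with properness supplying compact supports) is accurate. One minor remark: your closing ``compactness and patching'' step is superfluous and, as stated, would be delicate (no regularity of $\bdy D_1$ is assumed, so a small ball about a boundary point need not meet $D_1$ in a connected set); but this costs nothing, since the geometric decay $|P_k(v)|\leq C(1+\delta)^{-k}$ you invoke already gives locally uniform convergence of $\sum_k P_k$ on the balanced open set $\{h_1 < 1+\delta\}\supset \overline{D}_1$, i.e.\ a global extension outright.
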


\section{Dynamics of holomorphic mappings}\label{SEC:ITER}

In this section we summarize some material from the theory of (iterative) dynamics of
holomorphic self-maps of a taut manifold. Given complex manifolds $X$ 
and $Y$, ${\rm Hol}(X,Y)$ will 
denote the space of holomorphic mappings from $X$ to $Y$, where the topology on  
${\rm Hol}(X,Y)$ is the compact-open topology. We are interested in the set
$\Gamma(f)$ which is defined to be the set of all limit points of the iterates of a
holomorphic mapping $f \in {\rm Hol}(X,X)$, where $X$ is a taut complex manifold. 
Of course, $\Gamma(f)$ might be empty. The following result describes the possible
behaviours of the iterates.

\begin{result}[\cite{abate1989iteration}, Chapter 2.1]\label{RES:FIX}
  Let $X$ be a taut manifold, and $f \in {\rm Hol}(X,X)$. Then either the sequence
  $\{f^k \}$ of iterates of $f$ is compactly divergent, or there exists a complex 
  submanifold $M$ of $X$ and a holomorphic retraction $\rho:X \to M$ (i.e., $\rho^2 = 
  \rho$) such that every limit point $h \in {\rm Hol}(X,X)$ of $\{f^k\}$ is of the form
  $h = \gamma \circ \rho$, where $\gamma$ is an automorphism of $M$. Moreover,
  \begin{enumerate}
	\item   even $\rho$ is a limit point of the sequence $\{f^k\}$,
	\item   $f|_M$ is an automorphism of $M$.
  \end{enumerate}  
  \label{RES:LIMRET}
\end{result}

\begin{defn}
  With the notation as in Result \ref{RES:LIMRET}, we say that $f$ is 
  \emph{non-recurrent} if the sequence $\{f^k \}$ of iterates of $f$ is compactly 
  divergent. Otherwise, we say that $f$ is \emph{recurrent}, and we call the map
  $\rho$ the \emph{limit retraction}, and the manifold $M$ the \emph{limit manifold}.
\end{defn}

The behaviour of the iterates of a holomorphic self-map of a taut manifold $X$ depends
on whether $f$ has a fixed point or not. The following theorem gives a quantitative description
of the behaviour of the complex
derivative $f'$ at a fixed point of $f$; see \cite[Theorem~2.1.21]{abate1989iteration}
for a proof.

\begin{result}\label{RES:CAR}
  Let $X$ be a taut complex manifold, and let $f \in {\rm Hol}(X,X)$ have some fixed
  point $z_0 \in X$. Then
  \begin{enumerate}
	\item the spectrum of $f'(z_0)$ is contained in $\overline{\disk}$;
	\item $T_{z_0}X$ admits a $f'(z_0)$-invariant splitting $T_{z_0}X = L_N \oplus L_U$
	  such that the spectrum of $f'(z_0)|_{L_N}$ is contained in $\disk$, the spectrum 
	  of $f'(z_0)|_{L_U}$ is contained in $\bdy \disk$ and $f'(z_0)|_{L_U}$ is 
	  diagonalizable;
	\item $L_U$ is the complex tangent space at $z_0$ of the limit manifold of $f$.
  \end{enumerate}
\end{result}

\noindent The space $L_U$ is called the \emph{unitary space} of $f$ at $z_0$.
\smallskip 

The next result gives quite precise information about the set $\Gamma(f)$; see 
\cite[Corollary~2.4.4]{abate1989iteration} for a proof.

\begin{result}\label{RES:GAM}
  Let $X$ be a taut manifold, and let $f \in {\rm Hol}(X,X)$ be recurrent with limit 
  retraction $\rho: X \to M$. Then $\Gamma(f)|_M$ is isomorphic to a compact abelian
  subgroup of ${\rm Aut}(M)$; in particular, it is the closed subgroup generated by $f|_M \in 
  {\rm Aut}(M)$.  
\end{result}

\noindent{We point out that Result~\ref{RES:LIMRET} guarantees that $f|_M$ is an
automorphism of $M$.}
\smallskip

To conclude this section, we reiterate that one of the main results of \cite{opshtein2006dynamics}
plays a central role in the final step in our proof of Theorem~\ref{THM:BAL}. This is 
Result~\ref{RES:OPHI} above. All the terms occurring in that result have now been defined in this
section.
\smallskip

\section{Some essential propositions}\label{SEC:BALESS}

We begin with some material on complex geodesics. In what follows, given a domain $D \subset \C^n$,
we will denote the Kobayashi pseudo-distance by $K_D$, and the infinitesimal Kobayashi
metric by $\kappa_D$. We will denote the Poincar\'{e}
metric and distance on $\disk$ by $\omega$ and $\poin$, respectively.

\smallskip

The notion of complex geodesics was introduced by Vesentini (\cite{ves1981geo})
and is a useful tool in the study of holomorphic mappings.

\begin{defn}
  Let $D \subset \C^n$ be a domain, and let $\phi:\disk 
  \to D$ be a holomorphic map.
  \begin{enumeratei}
   \item Let $a, b \in D$. The map $\phi$ is said to be a \emph{$K_D$-geodesic for $(a,b)$}
   if there exist points $x, y \in \disk$ such that $\phi(x) = a$, $\phi(y) = b$, and
   $\poin(x,y) = K_D(a,b)$.
 \item Let $a \in D$ and $V \in \C^n \setminus \{0\}$. The map $\phi$ is said to be a
   \emph{$\kappa_D$-geodesic for $(a,V)$} if there exists a number
   $\alpha \in \C$ and a point $x \in \disk$ such that $\phi(x) = a$, 
   $V = \alpha\phi^\prime(x)$, and $\kappa_D(a; V) = \omega(x; \alpha)$.
  \end{enumeratei} 
\end{defn}

We need one more notion before we can state an important uniqueness result for certain
geodesics. 

\begin{defn}\label{DEF:HOLEXT}
  Let $D \subset \C^n$ be a bounded domain. We say that a point $a \in \bdy D$ is
  \emph{holomorphically extreme} if there is no non-constant holomorphic mapping $\phi :
  \disk \to \overline{D}$ such that $\phi(0) = a$.
\end{defn}

\begin{example}\label{EX:DSK}
  Every boundary point of a smoothly bounded pseudoconvex domain of finite type is          
  holomorphically extreme. This follows
  easily from the fact that any such point admits a continuous plurisubharmonic barrier. 
  The existence of a continuous plurisubharmonic barrier follows from
  \cite{sibony1987bregular} 
  together with \cite{catlin1984boundary}. Thus every boundary point of $\OM$ in
  Theorem~\ref{THM:BAL} is holomorphically extreme.
\end{example}

The following uniqueness result for $K_D$-geodesics illustrates the importance of the
above definition; see \cite[Proposition 8.3.5]{jarpflug1993invariant} for a proof. 

\begin{result}\label{RES:UNIQ}
  Let $D \subset \C^n$ be a bounded balanced pseudoconvex domain, and let $a \in D, a
  \neq 0$, be
  such that $a/M_D(a) \in \bdy D$ is holomorphically extreme, where $M_D$ is the 
  Minkowski functional of $D$. Then the mapping 
  \[
	\phi_a:\disk \ni \lambda \mapsto \lambda a/M_D(a)
  \]
  is the unique (modulo ${\rm Aut}(\disk)$) $K_D$-geodesic ($\kappa_D$-geodesic) for
  $(0,a)$ (resp., $(0,a/M_D(a))$). 
\end{result}

We now prove a proposition that is a simple consequence of the above uniqueness result. 
This proposition has already been obtained by Vesentini in the more general context of
reflexive Banach spaces. We use his techniques to give a simple proof in our special case.

\begin{prop}\label{PROP:FIX}
  Let $D \subset \C^n$ be a bounded, balanced pseudoconvex domain, all of whose boundary
  points are holomorphically extreme. Let $\rho: D \to D$ be a holomorphic retraction
  such that $\rho(0) = 0$. Then $M := \rho(D) = D \cap V$, where
  \[
	V := \{ v \in \C^n : \rho'(0) v = v \}.
  \]
\end{prop}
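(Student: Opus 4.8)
The plan is to prove both inclusions $M\subseteq D\cap V$ and $D\cap V\subseteq M$ using the uniqueness of complex geodesics (Result~\ref{RES:UNIQ}) in the spirit of Vesentini. First I would record two elementary facts. Differentiating the idempotency relation $\rho\circ\rho=\rho$ at the fixed point $0$ gives $\rho'(0)^2=\rho'(0)$, so $\rho'(0)$ is a linear projection whose range equals its $+1$-eigenspace; hence $V=\{v:\rho'(0)v=v\}=\rho'(0)(\C^n)$. Moreover $M=\rho(D)$ coincides with the fixed-point set of $\rho$, since $p=\rho(q)\in M$ forces $\rho(p)=\rho^2(q)=\rho(q)=p$. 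Throughout, for $a\in D\setminus\{0\}$ I write $\phi_a(\lambda)=\lambda a/M_D(a)$ for the geodesic of Result~\ref{RES:UNIQ}; this applies because every boundary point of $D$ is holomorphically extreme, so $a/M_D(a)\in\bdy D$ is holomorphically extreme. The points $a=0$ and $p=0$ are trivial, so below I treat only nonzero points.

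For the inclusion $M\subseteq D\cap V$, I would take $p\in M\setminus\{0\}$ and consider $h:=\rho\circ\phi_p:\disk\to D$. Since $\phi_p$ is a $K_D$-geodesic for $(0,p)$, there is $x_0\in\disk$ with $\phi_p(0)=0$, $\phi_p(x_0)=p$ and $\poin(0,x_0)=K_D(0,p)$. As $\rho$ fixes $p$ and $0$, the map $h$ satisfies $h(0)=0$ and $h(x_0)=p$; because holomorphic self-maps do not increase the Kobayashi distance, $K_D(0,p)=K_D(h(0),h(x_0))\le\poin(0,x_0)=K_D(0,p)$, so $h$ is itself a $K_D$-geodesic for $(0,p)$. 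By the uniqueness in Result~\ref{RES:UNIQ}, $h=\phi_p\circ\psi$ for some $\psi\in{\rm Aut}(\disk)$; since $\phi_p$ is injective and $h$ agrees with $\phi_p$ at the two distinct points $0$ and $x_0$, the automorphism $\psi$ fixes $0$ and $x_0$ and is therefore the identity. Thus $\rho\circ\phi_p=\phi_p$, i.e.\ $\rho$ fixes the whole slice $(\C\bcdot p)\cap D$ pointwise; differentiating $\rho(\phi_p(\lambda))=\phi_p(\lambda)$ at $\lambda=0$ yields $\rho'(0)p=p$, so $p\in V$.

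For the reverse inclusion $D\cap V\subseteq M$, I would take $a\in(D\cap V)\setminus\{0\}$ and set $g:=\rho\circ\phi_a$. Then $g(0)=0$, and using $a\in V$ one computes $g'(0)=\rho'(0)\phi_a'(0)=\rho'(0)(a/M_D(a))=a/M_D(a)=\phi_a'(0)$. Since $\phi_a$ is a $\kappa_D$-geodesic for $(0,\phi_a'(0))$ we have $\kappa_D(0;\phi_a'(0))=\omega(0;1)$; as $g(0)=0$ and $g'(0)=\phi_a'(0)$, the same equality $\kappa_D(g(0);g'(0))=\omega(0;1)$ holds, which is exactly the condition that $g$ be a $\kappa_D$-geodesic for $(0,g'(0))$. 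Uniqueness then gives $g=\phi_a\circ\psi$ with $\psi\in{\rm Aut}(\disk)$, and matching the $1$-jets at $0$ (namely $g(0)=\phi_a(0)$ and $g'(0)=\phi_a'(0)$) forces $\psi(0)=0$ and $\psi'(0)=1$, whence $\psi={\rm id}$ and $g=\phi_a$. Evaluating at the point $x=M_D(a)$, where $\phi_a(x)=a$, gives $\rho(a)=g(x)=\phi_a(x)=a$, so $a\in M$. Combining the two inclusions yields $M=D\cap V$. The main obstacle I expect is purely bookkeeping: verifying in each case that the composition $\rho\circ\phi$ is genuinely a geodesic (the distance computation for $h$ and the infinitesimal identity for $g$), and then correctly pinning down the residual automorphism in the ``modulo ${\rm Aut}(\disk)$'' uniqueness of Result~\ref{RES:UNIQ} by matching the jet or boundary data.
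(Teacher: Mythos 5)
Your proof is correct, and it shares the paper's decisive tool, the uniqueness of geodesics through the origin (Result~\ref{RES:UNIQ}); indeed your second inclusion is essentially the paper's entire computation: for $a \in (V\cap D)\setminus\{0\}$ one checks that $\rho\circ\phi_a$ is a $\kappa_D$-geodesic for $(0,a/M_D(a))$, invokes uniqueness to write $\rho\circ\phi_a=\phi_a\circ\psi$ with $\psi\in{\rm Aut}(\disk)$, and recovers $a\in\rho(D)$ (the paper simply substitutes $\psi^{-1}(M_D(a))$ into this identity rather than pinning down $\psi={\rm id}$ by jet-matching, but that difference is cosmetic). Where you genuinely diverge is the inclusion $M\subseteq D\cap V$. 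The paper gets it from structure theory: it quotes Result~\ref{RES:FIX} to assert that $M=\rho(D)$ is a connected complex submanifold of $D$ whose tangent space at $0$ is $V$, so that once $V\cap D\subseteq M$ is known, equality follows because the two have the same dimension and $M$ is connected ($V\cap D$ is open and closed in $M$). You instead prove $M\subseteq V$ by a second, independent geodesic argument: points of $M$ are fixed points of $\rho$, so $\rho\circ\phi_p$ is a $K_D$-geodesic for $(0,p)$, and uniqueness together with matching at the two distinct points $0$ and $M_D(p)$ forces $\rho\circ\phi_p=\phi_p$, whence $\rho'(0)p=p$ upon differentiating at $\lambda=0$. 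What your route buys: it never needs the nontrivial fact that the image of a holomorphic retraction is a submanifold with $T_0M=V$, avoids the dimension-and-connectedness bookkeeping, and handles the degenerate case $\rho\equiv 0$ uniformly (the paper disposes of it separately at the outset). What the paper's route buys: Result~\ref{RES:FIX} is needed elsewhere in the paper anyway, so leaning on it reduces the proof to a single geodesic computation instead of two.
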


\begin{proof}
  If $\rho\equiv 0$, then there is nothing to prove. So assume that $\rho$ is non-constant.
  From Result \ref{RES:FIX}, it follows that $M$ is a connected complex submanifold of 
  $D$ whose complex tangent space at $0$ is $V$. Let $v \in V \cap D$, $v \neq 0$. From
  Result~\ref{RES:UNIQ} and our assumption on $\bdy D$, it follows that the mapping 
  \[
	\phi:\disk \ni \lambda \longmapsto \lambda v/M_D(v) \in D
  \]
  is the unique (modulo ${\rm Aut}(\disk)$) $\kappa_D$-geodesic for
  $(0, v/M_D(v))$. Note that
  \[
	\kappa_D(\rho \circ \phi(0); (\rho \circ \phi)'(0)) =  
	\kappa_D(0; v/M_D(v)) = \kappa_D(\phi(0); v/M_D(v)),	
  \]
  whence $\rho\circ\phi$ is also a $\kappa_D$-geodesic for $(0, v/M_D(v))$. 
  By uniqueness, it follows that $\rho \circ \phi = \phi\circ \psi$, where
  $\psi \in {\rm Aut}(\disk)$. Substituting $\psi^{-1}(M_D(v))$\,($\in \disk$) into
  this equation, we see that $v$ lies in the image of $\rho$. Hence $V \cap D \subset M$.
  Now, $V$ and $M$ have the same dimension. Since
  $M$ is connected, it follows from the principle of analytic 
  continuation that $M = D \cap V$.  
\end{proof}

\section{Proofs of Proposition~\ref{PROP:LOCUS} and Theorem~\ref{THM:BAL}}\label{SEC:BALPROOF}

If we assume that the map $F: \OM \to \OM$, as stated in Theorem~\ref{THM:BAL} is
branched, then our assumptions on the geometry of $\OM$  stated in the first sentence of
Theorem~\ref{THM:BAL} give us a structural result for 
the branch locus $V_F$ of $F$. We begin with the proof of this result --- i.e.,
Proposition~\ref{PROP:LOCUS}. 
We mention here that some of the arguments used in the proof
were inspired by the arguments used in \cite{cof2007trans}.
\smallskip

A comment about notation: in what follows, $\dim_H(S)$ will denote the Hausdorff dimension
of the set $S \subset \C^n$.

\begin{proof}[The proof of Proposition~\ref{PROP:LOCUS}]
  Let $X_1, \ldots,X_m$ be the distinct irreducible
  components of the variety $V_F$. By Lemma \ref{LEM:EXTBELL}, $F$ extends 
  holomorphically to a neighbourhood $N$ of $\overline{\OM}$. For the moment, let 
  $\widetilde{F}$ denote this extension.

  \begin{lemma}\label{LEM:DIM}
	Let $X$ be an arbitrary irreducible component of the variety $V_F$ (viewed as a
	subvariety of $\OM$). Let $E := \overline{X} \setminus X$. There exists a non-empty
	open (relative to $E$) subset $\omega_X \subset E$ such that, for each $p
	\in \omega_X$, there exists a connected neighbourhood $N_p \ni p$ satisfying
	\[
	  \dim_H(E \cap N_p) \geq 2n - 3.
	\]
  \end{lemma}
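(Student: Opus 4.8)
The plan is to realise $E$, near suitably chosen points, as the \emph{topological frontier} of a piece of a complex manifold, and then to extract the dimension bound from a separation theorem in dimension theory. First I would pass to the extension: by Lemma~\ref{LEM:EXTBELL}, $F$ extends to a holomorphic $\widetilde{F}$ on a neighbourhood $N$ of $\overline{\OM}$, so $V_{\widetilde{F}}=\{z\in N:\cjac(\widetilde{F})(z)=0\}$ is a hypersurface in $N$. Let $\widetilde{X}$ be the irreducible component of $V_{\widetilde{F}}$ containing $X$; then $\overline{X}\subset\widetilde{X}$, and since $X\subset\OM$ we get $E=\overline{X}\setminus X\subset\widetilde{X}\cap\bdy\OM$. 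Write $W:=\widetilde{X}_{\mathrm{reg}}$ for the regular locus; as the regular locus of an irreducible variety it is a connected complex manifold of pure dimension $n-1$ (real dimension $2n-2$). Fix a defining function $r$ for $\OM$ and set $u:=r|_{W}$, a smooth real function on $W$ with $\{u<0\}=W\cap\OM$, $\{u>0\}=W\setminus\overline{\OM}$, and $\{u=0\}=W\cap\bdy\OM$.

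I would then take $\omega_X:=E\cap W$. This is relatively open in $E$, because $W$ is open in $\widetilde{X}\supset E$, and it is non-empty: a positive-dimensional variety is never compact, so $\widetilde{X}\not\subset\overline{\OM}$, whence $\{u<0\}$ and $\{u>0\}$ are both non-empty open subsets of the connected manifold $W$; their common frontier $\partial_{W}\{u<0\}\subset\{u=0\}$ is therefore non-empty, and its points lie in $\overline{X}\cap\bdy\OM=E$ and in $W$, i.e. in $\omega_X$.

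The main step is to fix $p\in\omega_X$, choose a small ball $B\subset W$ about $p$, and show that $E\cap B$ separates $B$. Here $\{u<0\}\cap B\neq\emptyset$ since $p\in\overline{X}$. Crucially, $\{u>0\}\cap B\neq\emptyset$ as well: otherwise $u\leq 0$ on $B$, so $B\subset\overline{\OM}$, and any non-constant analytic disc in the complex manifold $B$ through $p$ would be a non-constant holomorphic map $\disk\to\overline{\OM}$ sending $0$ to $p$, contradicting the fact that $p\in\bdy\OM$ is holomorphically extreme (Example~\ref{EX:DSK}). Moreover $\{u=0\}$ has empty interior in $W$, since otherwise $\bdy\OM$ would contain a germ of the complex manifold $W$, which is impossible at a point of finite type. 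Hence $B\setminus(E\cap B)=(\{u<0\}\cap B)\sqcup(\{u>0\}\cap B)$ is a disjoint union of two non-empty open sets, so the closed set $E\cap B=\partial_{B}\{u<0\}$ disconnects the ball $B$. By the classical theorem that a closed subset which separates a connected $k$-manifold has topological dimension at least $k-1$, together with the inequality $\dim_{\mathrm{top}}\leq\dim_H$, I obtain $\dim_H(E\cap B)\geq(2n-2)-1=2n-3$; choosing a connected neighbourhood $N_p\subset\C^n$ with $N_p\cap W=B$ then gives the asserted bound.

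The hard part is precisely the two-sidedness claim $\{u>0\}\cap B\neq\emptyset$ at $p$. Without it the level set $\{u=0\}$ could collapse to something of dimension $2n-4$ (as for $u=-|w_1|^2$, where $\widetilde{X}$ would touch $\bdy\OM$ from inside along a whole subspace), and $E$ would fail to be a genuine separator. This is exactly where the finite-type hypothesis enters, through holomorphic extremality: it forbids $W$ from lingering inside $\overline{\OM}$ near a boundary point, so the complement side $\{u>0\}$ must also accumulate at $p$ and $\{u=0\}$ becomes a true two-sided separator. I expect the only remaining care to be the routine local identification $E\cap B=\partial_{B}\{u<0\}$, which follows because $X$ is closed in $\OM$ and, near the regular point $p$, coincides with $\{u<0\}$.
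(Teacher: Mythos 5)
Your strategy mirrors the paper's proof almost step for step: extend $F$ to $\widetilde{F}$ via Lemma~\ref{LEM:EXTBELL}, pass to the regular locus of the component $\widetilde{X}$ of $V_{\widetilde{F}}$ containing $X$ (your $\omega_X = E\cap W$ is literally the paper's $\omega_X = E\setminus S$), use holomorphic extremality (Example~\ref{EX:DSK}) to force $\widetilde{X}$ to cross to the outside of $\overline{\OM}$ near such points, and conclude with the Hurewicz--Wallman separation theorem plus $\dim_{\mathrm{top}}\leq\dim_H$. The two-sidedness step, which you rightly identify as the crux, is argued exactly as in the paper.

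There is, however, a genuine gap, and it occurs twice, both times from the same conflation: you treat $\{u<0\}=W\cap\OM$ as if it were (the regular part of) the \emph{given} component $X$, whereas $\widetilde{X}\cap\OM$ may consist of several distinct irreducible components of $V_F$, and $W$ may also touch $\bdy\OM$ from outside. Concretely: (a) in your non-emptiness argument, a point of $\partial_W\{u<0\}$ is a limit of points of $W\cap\OM$, hence lies in $\overline{X'}\cap\bdy\OM$ for \emph{some} component $X'\subset\widetilde{X}$ of $V_F$, but nothing forces $X'=X$; so the assertion that these frontier points lie in $\overline{X}\cap\bdy\OM=E$ is unjustified. (b) The ``routine local identification'' $E\cap B=\partial_B\{u<0\}$, and with it the decomposition $B\setminus(E\cap B)=(\{u<0\}\cap B)\sqcup(\{u>0\}\cap B)$, fails for the same reason: frontier points of \emph{other} components of $V_F$ accumulating at $p$ lie in $\partial_B\{u<0\}$ but need not lie in $E$, and points where $W$ is tangent to $\bdy\OM$ from outside lie in $\{u=0\}\cap B$ but in neither $E$ nor $\partial_B\{u<0\}$; so removing only $E\cap B$ need not disconnect $B$. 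Both gaps are repaired by one substitution: work with $A:=X\cap W$ (resp.\ $X\cap B$) instead of $\{u<0\}$. This set is open in $W$ (a connected manifold germ inside $V_F$ lies in a single irreducible component, which must be $X$ by a dimension count), and its frontier genuinely lies in $E$ because $X$ is closed in $\OM$. Since $A\neq\emptyset$ and $B\setminus\overline{A}\supset\{u>0\}\cap B\neq\emptyset$ by your extremality argument, the closed set $\partial_B A\subset E\cap B$ separates $B$ into the two non-empty open sets $A\cap B$ and $B\setminus\overline{A}$, and Hurewicz--Wallman together with monotonicity of $\dim_H$ finishes the proof. It is worth noting that the paper secures non-emptiness of $\omega_X$ by an entirely different device --- a maximum-modulus argument on $X$ applied to the partial derivatives of $\cjac(F)$, showing $E\not\subset S$ --- whereas your connectedness-of-$W$ argument, once run on $A$ rather than on $\{u<0\}$, is a valid and arguably more elementary alternative.
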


  \begin{proof}
	Let $\widetilde{F}$ be as introduced just prior to the lemma. Note that $X$ lies in an
	irreducible component $\widetilde{X}$ of $V_{\widetilde{F}}$. Let $S$ denote the
	subvariety of singular points of $\widetilde{X}$. Applying the maximum modulus
	principle on the irreducible variety $X$ to the functions
	\[
	  \left.\frac{\partial \cjac(F)}{\partial z_j}\right|_X, \; \; j =1,2,\dots,n
	\]
	(which vanish simultaneously in $X$ precisely on $S \cap X$), we see that $E \cap S
	\varsubsetneq E$. Let $\omega_X := E \setminus S$. Since non-singularity is an open
	condition on $\widetilde{X}$, $\omega_X$ is open relative to $E$. Pick a point $p \in
	\omega_X$. By definition, $\exists r_p > 0$ such that $\overline{B}(p,r_p) \cap S =
	\emptyset$ and $\widetilde{X} \cap B(p,r_p)$ is a complex submanifold of $B(p,r_p)$. 
    \smallskip
	
	\noindent {\bf Claim:} \emph{For each $r \in (0,r_p)$, $(\widetilde{X} \setminus 
	\overline{\OM}) \cap B(p,r) \neq \emptyset$}.
	
	\noindent Assume this is false. Then, $\exists r \in (0,r_p)$ such that $\widetilde{X}
	\cap B(p,r) \subset \overline{\OM}$. This implies that there exists a non-constant 
	holomorphic map $\psi: \disk \to \C^n$ such that $\psi(\disk) \subset \widetilde{X}
	\cap B(p,r)
	\cap \overline{\OM}$ and $\psi(\disk) \cap \bdy \OM \neq \emptyset$. But this is 
	impossible as every point of $\bdy \OM$ is holomorphically extreme (see Example 
	\ref{EX:DSK}). Hence the claim.
	\smallskip

	Now, let $r_p^* \in (0,r_p)$ be so so small that $B(p,r_p^*) \setminus \bdy \OM$ has 
	exactly
	two connected components (possible as $\bdy \OM$ is an imbedded smooth submanifold),
	\[
	  B(p,r_p^*) \setminus \bdy \OM = C^+ \sqcup C^-.
	\]
	By our above claim, $C^{\pm} \cap \widetilde{X} \neq \emptyset$. Thus $\bdy \OM \cap 
	\widetilde{X}
	\cap B(p,r_p^*)$ disconnects the manifold $\widetilde{X} \cap B(p, r_p^*)$.
	\smallskip

	In what follows, $\dim_I$ will denote the \emph{inductive dimension}. The precise
	definition is rather involved and we refer the reader to 
	\cite[Chapters II and III]{hurwit1941dim}. The fact
	that we need is Corollary 1 to \cite[Theorem IV.4]{hurwit1941dim}: since $\bdy \OM 
	\cap \widetilde{X} \cap B(p,r_p^*)$ disconnects $\widetilde{X} \cap
	B(p,r_p^*)$
	\begin{equation}
	  \dim_I(\bdy \OM \cap \widetilde{X} \cap B(p,r_p^*)) \geq \dim_{\R}(\widetilde{X}
	  \cap B(p,r_p^*)) - 1 = 2n -3.
	  \label{EQ:DIM}
	\end{equation}
	It is well-known that the Hausdorff dimension dominates the inductive dimension; see for
	instance \cite[Chapter VII, \S 4]{hurwit1941dim}. By \eqref{EQ:DIM}, therefore, writing 
	$N_p : =B(p,r_p^*)$,
	\[
	  \dim_H(N_p \cap E) \geq 2n-3.
	\]
	Since the above is true for any $p \in \omega_X$, we are done.
  \end{proof}

  Since the extension $\widetilde{F}$ of $F$ is uniquely determined by the latter, for the
  remainder of the proof we shall not use different symbols for $F:\OM \to \OM$ and
  $\widetilde{F}$.
  \smallskip

  Let $E$ be as in Lemma~\ref{LEM:DIM}. Let us fix a point $z_0 \in E$ for the moment. 
  From
  Result~\ref{RES:TAU}, all the points of $E$ are necessarily weakly pseudoconvex. Note 
  that $\tau(e^{i\theta} z_0) = \tau(z_0)$. From the fact that $\tau$ is upper 
  semi-continuous, it follows that 
  the set $\{w \in \bdy \OM : \tau(w) < \tau(F(z_0)) + 1\}$ is open in $\bdy \OM$, and
  consequently so is its inverse image under $F$, $\{z \in \bdy \OM : \tau(F(z)) <
  \tau(F(z_{0})) + 1 \}$. The latter set obviously contains $z_0$. This implies that for
  $\theta$ close to $0$, we must have $\tau(F(e^{i\theta}z_0)) \leq \tau(F(z_0)) <
  \tau(z_0) = \tau(e^{i\theta}z_0)$ which, by Result~\ref{RES:TAU}, implies that for
  $\theta$ close to $0$, we have $e^{i\theta}z_0 \in \overline{V_F}$. Restricting 
  $\cjac(F)$ to the
  set $\disk \bcdot z_0$, and observing that the boundary-values of this restriction
  vanish on an arc of $\bdy\disk$, we see that
  $\cjac(F)$ must vanish on the set $\disk \bcdot z_0$. As $z_0\in E$ was arbitrary,
  we get that for each 
  $z \in E,\ \disk \bcdot z \subset X_i$, for some $i$. Let us define
  \[
	E_i := \{w \in E: \disk \bcdot w \subset X_i\}, \ i =1,\ldots,m.
  \]
  Since, by Lemma~\ref{LEM:DIM}, $E$ is of Hausdorff dimension at least $2n - 3$, there is
  an $i_0, 1 \leq i_0 \leq m$, such that $\dim_H(E_{i_0}) \geq 2n - 3$. Let us call this
  set $E'$. Then: 
  \[ 
	\bigcup_{ z \in E'}\disk \bcdot z \subset X_{i_0}.
  \]
  As $X$ and $X_{i_0}$ are irreducible varieties the intersection of whose closures is of
  Hausdorff dimension at least $2n - 3$, it must be that $X = X_{i_0}$. This follows from
  the stratification of $X \cap X_{i_0}$ \cite[Chapter~1, \S\,5.5]{chirka1989complex}, the
  properties of the Hausdorff measure and 
  \cite[Corollary~1 of Chapter~1, \S\,5.3]{chirka1989complex}.
  \smallskip

  We have proved that 
  \begin{equation}\label{EQ:DSKX}
	\bigcup_{ z \in E'}\disk \bcdot z \subset X.
  \end{equation}
  Now fix $\lambda \in \disk$, and consider the holomorphic function $h_{\lambda}(z) := 
  \cjac(F)(\lambda z)$ defined on $X$. From what we have shown, \eqref{EQ:DSKX}
  in particular, $h_{\lambda}$ vanishes on
  a subset of Hausdorff dimension at least $2n -3$ of the irreducible variety $X$. Hence 
  $h_{\lambda}$ must vanish identically on $X$, and this is true for each $\lambda \in
  \disk$. Thus, we have shown that, given $z \in X$, $(\disk \bcdot z) \cap \OM \subset
  V_F$. Since $V_F$ comprises finitely many irreducible components, by a similar
  argument as in the previous paragraph (with the role of $E$ now taken by $X$),
  we actually have $(\disk \bcdot z) \cap \OM \subset X$. 
  By analytic continuation, it follows that if $z \in X$, then $(\C \bcdot z) \cap \OM
  \subset X$. 
\end{proof}

We now have all the tools needed to prove our main theorem. Owing to the fact that the domain
$\OM$ admits a smooth defining function that is plurisubharmonic in $\OM$, it is 
\emph{a fortiori} pseudoconvex. We shall use this fact without explicit mention in our proof.

\begin{proof}[The proof of Theorem~\ref{THM:BAL}]
  Observe that $\OM$ is contractible. Thus, to prove that
  $F$ is an automorphism it suffices, in view of the monodromy theorem, to
  establish that $F$ is unbranched. So we will 
  assume that $F$ is branched and reach a contradiction. We will not, hereafter, remark
  upon the well-definedness of quantities such as $\cjac(F)(p)$ for $p\in \bdy\OM$.
  In view of Lemma~\ref{LEM:EXTBELL}, these are indeed well-defined.

  \vspace{0.1in}

  \noindent{\bf Step 1.} \emph{Proving that $F^{-1}\{0\} = \{0\}$.}
  If not, there is point $0 \neq z_0 \in \OM$ such that $F(z_0) = 0$. From 
  Lemma~\ref{LEM:VAR}, we have distinct irreducible subvarieties $L_i \subset V_{F^i}$ such 
  that $F|_{L_{i+1}}: L_{i+1} \to L_i$ is a proper holomorphic mapping. By Proposition~\ref{PROP:LOCUS},
  each $L_i$ contains $0$, whence we can select each $L_i$ as described in Lemma~\ref{LEM:VAR} 
  and ensure that $z_0 \in L_i \ \forall i\geq 2$. Again from Proposition~\ref{PROP:LOCUS}, it follows
  that $\Lambda := (\C \bcdot z_0) \cap \OM \subset L_i \ \forall i\geq 2$. This means that 
  the sets $F^k(\Lambda) \subset L_2, \forall k \in \Z_+$. Let $q \in (\C \bcdot z_0) \cap 
  \bdy \OM = \overline{\Lambda} \setminus \Lambda$. It is elementary that
  \[
   V_{F^n} = \bigcup_{k=0}^{n-1}(F^k)^{-1}(V_F), \quad n\in \Z_+,
  \]
  with the understanding that $F^0 = {\sf id}_{\Omega}$. Thus --- we refer to the
  recipe for the $L_i$'s in Lemma \ref{LEM:VAR} ---   $L_2 \subset V_{F^{2k}} \forall k \in \Z_+$.
  Note that $q\in \overline{L_2}\setminus L_2$. At this stage, we are precisely in the situation
  prior to \eqref{EQ:TYPE_DES}
  in the proof of Lemma~\ref{LEM:VAR}, except that $q$ belongs to the (prolongation of)
  the branch locus of $F^2$. Therefore, it follows as in the
  proof of Lemma~\ref{LEM:VAR} (taking $p=2$ in the relevant argument), that   
  \[
	0 \leq \cdots < \tau(F^{2k}(q)) < \cdots < \tau(F^4(q)) < \tau(F^2(q)) < \tau(q),
  \]
  which contradicts the conclusion of Result~\ref{RES:FIN}. Our claim follows.
 \smallskip

  We should point out (although we shall not make use of it below) that from a theorem of Bell
  \cite[Theorem~1]{bell1982proper}, it follows that $F$ is a polynomial mapping. 
  
  \vspace{0.1in}

  \noindent{\bf Step 2.} \emph{Analyzing the limit manifold of $F$.}
  As $0$ is a fixed point of $F$, it follows that $F$ is recurrent. Let $\rho : \OM \to M$
  be the limit retraction. As $\OM$ is pseudoconvex and of finite type, by Example~\ref{EX:DSK}
  every point in $\bdy \OM$ is holomorphically extreme. Consequently, from Proposition
  \ref{PROP:FIX} and Result \ref{RES:CAR} it follows that $M = L_U \cap \OM$, where $L_U$
  is the unitary space of $F$ at $0$. Recall that $F'(0)|_{L_U}$ is diagonalizable; see
  Result~\ref{RES:CAR}. So without loss of generality, (replacing $\OM$ by a suitable
  linear image and conjugating $F$ by a suitable linear operator, if needed) we may 
  assume that $L_U = \C^m \times \{0_{\C^{n-m}}\}$, and that $F'(0)|_{L_U}$ is given by 
  \begin{equation}\label{EQ:LIN}
	(z_1,z_2,\ldots,z_m,0,\ldots,0) \mapsto (e^{i\theta_1}z_1,e^{i\theta_2}z_2,\ldots,
	e^{i\theta_m}z_m,0,\ldots,0). 
  \end{equation}
  By Cartan's uniqueness theorem, it also follows that $F|_M \equiv F'(0)|_M$.

  \vspace{0.1in}

  \noindent{\bf Step 3.} \emph{Proving that $\dim M \leq 1$.}
  Suppose $\dim M > 1$. From the previous steps, we have that $0 \in V_F \cap M$. Therefore
  the set $\{z \in M : \cjac(F)(z) = 0\}$ is a non-empty analytic subvariety of $M$. As 
  $\dim M > 1$, it follows that there exists a point $p \in \overline{M} \cap \bdy \OM$
  such that $\cjac(F)(p) = 0$. 
  \smallskip

  From Result~\ref{RES:GAM}, and the fact that the maps $F^k|_M, k \in \Z$,
  are all of the special form \eqref{EQ:LIN},
  it follows that there is a strictly increasing sequence $\{n_k\} \in \N$ such that
  $F^{n_k}|_{\overline{M}} \to (F|_{\overline{M}})^{-1}$ uniformly on $\overline{M}$.
   Let 
	$p_k := (F|_{\overline{M}})^{-n_k}(p)$. By Result~\ref{RES:TAU}, it follows that
  $\tau(p_{k+1}) \geq \tau(p_k) \geq \tau(p)$. But, $p_k \to F(p)$, and as $p \in 
  \overline{V}_F, \tau(F(p)) < \tau(p)$, which means that
  \[
	\limsup_{k \to \infty} \tau(p_k) \geq \tau(p) > \tau(F(p)),
  \]
  which contradicts the fact that $\tau$ is upper semi-continuous. This proves that
  $\dim(M) \leq 1$.

  \vspace{0.1in}

  The conclusion of Step 3 is in conflict with the conclusion of Opshtein's theorem, i.e.,
  Result~\ref{RES:OPHI}. Therefore, $F$ is unbranched. As $\OM$ is simply connected, it
  follows from the monodromy theorem (just continue the germ of a local inverse of $F$
  at $0$ along paths in $\OM$ from $0$ to $z\in \OM$: the germ obtained at $z$ will be a
  local inverse by the permanence of relations) that $F$ is an automorphism. 
\end{proof}

\noindent {\bf Acknowledgements.} I would like to thank my advisor Gautam Bharali
for his support during the course of this work, and for suggesting several
useful ideas. The idea behind Lemma~\ref{LEM:DIM} was given by him. I 
would also like to thank my colleague and friend G.P. Balakumar for some informative 
discussions related to this work. I am truly grateful to the anonymous referee of this
article for the many suggestions for improving the exposition.

\end{document}